\documentclass[11pt]{article}
\usepackage[english]{babel}
\usepackage[babel]{microtype}
\usepackage[margin=2.3cm]{geometry}
\usepackage{amsmath,amssymb,amsfonts,amsthm,mathtools,bm,mathrsfs,bbm}
\usepackage[shortlabels]{enumitem}
\usepackage{needspace}
\usepackage[hidelinks]{hyperref}
\newtheorem{theorem}{Theorem}[section]
\newtheorem{lemma}[theorem]{Lemma}
\newtheorem{prop}[theorem]{Proposition}
\newtheorem{remark}[theorem]{Remark}
\newtheorem{cor}[theorem]{Corollary}

\newcommand{\E}{\mathbb{E}}
\newcommand{\Prb}{\mathbb{P}}
\newcommand{\Inf}{\mathrm{Inf}}
\newcommand{\Var}{\mathrm{Var}}
\newcommand{\Cov}{\mathrm{Cov}}
\newcommand{\W}{\mathcal W}
\newcommand{\cube}{\{0,1\}}
\newcommand{\tup}[1]{\left\langle #1\right\rangle}
\DeclareMathOperator{\supp}{supp}
\newcommand{\eqtag}[1]{\refstepcounter{equation}\label{#1}\textup{(\theequation)}}

\title{A proof of Chv\'atal's conjecture via a sharp correlation inequality}
\author{Fan Chang\thanks{School of Statistics and Data Science, Nankai University, Tianjin, China. Email: \texttt{1120230060@mail.nankai.edu.cn}. Supported by the National Natural Science Foundation of China (NSFC) under grant 124B2019.}, \quad 
Hong Liu\thanks{Extremal Combinatorics and Probability Group (ECOPRO), Institute for Basic Science (IBS), Daejeon, South Korea. Email: \texttt{hongliu@ibs.re.kr}. Supported by Institute for Basic Science IBS-R029-C4.},\quad
Miao Liu\thanks{Research Center for Mathematics and Interdisciplinary Sciences, Shandong University, Qingdao, China. Email: \texttt{liumiao10300403@163.com}. Supported by the National Natural Science Foundation of China under Grant Nos.~12571352 and 12231014.}
}
\date{}
\begin{document}
\maketitle

\begin{abstract}
We prove Chv\'atal's conjecture, posed in 1972: every hereditary family of subsets of a finite set has a largest intersecting subfamily that is a star. More generally, we prove a sharp correlation inequality for increasing
Boolean functions $f,g:\{0,1\}^n\to\{0,1\}$. Writing $g^*(x)=1-g(1-x)$, we show that
$$
\sum_{\varnothing\ne S\subseteq[n]}
\hat{g}(S)^2\max_{i\in S}\mathrm{Inf}_i[f]\le\frac{2\mathrm{Cov}(f,g)\mathrm{Cov}(f,g^*)}{\mathrm{Cov}(f,g)+\mathrm{Cov}(f,g^*)}.
$$
When $g$ is antipodal, that is, $g=g^*$,
this yields $\mathrm{Cov}(f,g)\ge\frac{1}{4}\min_{i\in[n]}\mathrm{Inf}_i[f]$, the correlation formulation of Chv\'atal's conjecture due to
Friedgut, Kahn, Kalai and Keller.
\end{abstract}

\section{Introduction}\label{sec:intro}

Chv\'atal conjectured in 1972~\cite[Problem~25]{CKK1972} (see also~\cite{C1974}) that some largest intersecting subfamily of every hereditary family is a star. Here a family $\mathcal{D}\subseteq 2^{[n]}$ is \emph{hereditary}, or \emph{decreasing}, if it is closed under taking subsets. A family $\mathcal{F}$ is \emph{intersecting} if $A\cap B\ne\varnothing$ for all $A,B\in\mathcal{F}$. For $i\in[n]$, the \emph{star} of $\mathcal{D}$ centred at $i$ is $\mathcal{D}(i)=\{G\in\mathcal{D}:i\in G\}$.  

We prove this conjecture. 
\begin{theorem}\label{conj:chvatal}
Let $\mathcal{D}\subseteq 2^{[n]}$ be decreasing. If $\mathcal{F}\subseteq \mathcal{D}$ is intersecting, then there exists $k\in[n]$ such that
$$
|\mathcal{F}|\le\bigl|\mathcal{D}(k)\bigr|.
$$
\end{theorem}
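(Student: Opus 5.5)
We would derive Theorem~\ref{conj:chvatal} from the correlation inequality announced in the abstract, in its antipodal form $\Cov(f,g)\ge\frac14\min_{i}\Inf_i[f]$ for increasing $f$ and increasing antipodal $g$, following the Friedgut--Kahn--Kalai--Keller reduction. The first step is to reduce to maximal intersecting families. If $\mathcal F\subseteq\mathcal D$ is intersecting, enlarge it to a maximal intersecting family $\mathcal G\subseteq 2^{[n]}$ with $\mathcal F\subseteq\mathcal G$. Such a $\mathcal G$ is increasing, and for every $A\subseteq[n]$ exactly one of $A$ and $[n]\setminus A$ lies in $\mathcal G$. Hence its indicator $g=1_{\mathcal G}$ satisfies $g(x)=1-g(1-x)$, so $g$ is increasing and antipodal with $\E g=1/2$. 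Since $|\mathcal F|\le|\mathcal D\cap\mathcal G|$, it suffices to show $|\mathcal D\cap\mathcal G|\le\max_k|\mathcal D(k)|$.

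The second step encodes $\mathcal D$ by an increasing function. Let $f=1_{\mathcal U}$, where $\mathcal U=2^{[n]}\setminus\mathcal D$ is increasing. Work under the uniform measure and write counts as $2^n$ times expectations. Then
\[
|\mathcal D\cap\mathcal G|/2^n=\E[(1-f)g]=\tfrac12(1-\E f)-\Cov(f,g).
\]
For the stars, $|\mathcal D(k)|/2^n=\Prb(x_k=1,f=0)$. Since $f$ is increasing, $\Inf_k[f]=\Prb(f(x)\neq f(x^{\oplus k}))$ equals $2\bigl(\Prb(x_k=0,f=0)-\Prb(x_k=1,f=0)\bigr)$. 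Together with $\Prb(x_k=0,f=0)+\Prb(x_k=1,f=0)=1-\E f$, this gives
\[
|\mathcal D(k)|/2^n=\tfrac12(1-\E f)-\tfrac14\Inf_k[f].
\]
The desired inequality therefore becomes $\Cov(f,g)\ge\frac14\min_k\Inf_k[f]$, which is exactly the antipodal corollary quoted in the abstract. Taking $k$ to be a minimizing coordinate gives $|\mathcal F|\le|\mathcal D(k)|$.

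The real content, and the main obstacle, is proving the sharp correlation inequality itself. Everything above is a routine translation, and one should check the normalization of $\Inf$ (probability convention) so that the constant $1/4$ matches. For the inequality, I would try the following route.
\begin{itemize}[leftmargin=*]
\item Expand in the Fourier basis: $\Cov(f,g)=\sum_{S\neq\varnothing}\hat f(S)\hat g(S)$.
\item Antipodality kills the even-size $S$, so for $g=g^*$ one needs $\sum_S\hat g(S)^2\max_{i\in S}\Inf_i[f]\le\Cov(f,g)$.
\item Combine this with $\sum_S\hat g(S)^2=\Var g=1/4$ to obtain $\Cov(f,g)\ge\frac14\min_i\Inf_i[f]$.
\end{itemize}
Proving the weighted Parseval bound would likely go by induction on $n$, restricting coordinates and using Harris/FKG-type monotonicity, with the harmonic-mean form designed to make the induction close. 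I expect that to be the hardest part.
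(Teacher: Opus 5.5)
Your reduction is correct and is essentially the paper's proof of Theorem~\ref{conj:chvatal}: extend $\mathcal F$ to a maximal intersecting (hence increasing and antipodal) family $\mathcal G$, take $f=1-\mathbbm{1}_{\mathcal D}$, and observe that your two counting formulas amount to the identity $\Cov(f,g)-\frac14\min_i\Inf_i[f]=2^{-n}\bigl(\max_i|\mathcal D(i)|-|\mathcal D\cap\mathcal G|\bigr)$, so Corollary~\ref{conj:FKKK-antipodal} finishes the argument. The proof is complete only because that corollary is taken as given, and your inductive sketch for it is not an argument; the paper proves it without induction, by bounding $\max_{i\in T}\Inf_i[f]$ via Lemma~\ref{lem:flip}, rewriting the resulting parity sum as $2^{-n}\sum_{x\in\mathcal U,\,y\notin\mathcal U}\widehat{g-t}(x\oplus y)^2$ (with $\mathcal U=\supp f$), and applying Bessel's inequality to $H_y(x)=2^nf(x)\widehat{g-t}(x\oplus y)$ against Gram--Schmidt orthonormalizations of the monomials $M_S$ ($S\in\mathcal U\setminus\mathcal G$) and $\chi_{[n]}M_S$ ($S\in\mathcal U\setminus\mathcal G^*$), with the harmonic mean arising from optimizing the shift $t$ rather than from closing an induction.
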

We briefly recall some earlier results. Berge~\cite{Berge1976} proved that either $\mathcal{D}$ or $\mathcal{D}\setminus\{\varnothing\}$ can be partitioned into pairs
of disjoint sets, giving the bound $|\mathcal F|\le|\mathcal{D}|/2$. Chv\'atal~\cite{C1974} established the conjecture for
left-compressed families, and Snevily~\cite{Snevily1992} extended
this to families that are $ij$-compressed for every $j\ne i$,
for some fixed $i$.
For a uniform variant, Kupavskii~\cite{Kupavskii2023}
proved that a largest $t$-intersecting subfamily of the $k$th layer of $\mathcal{D}$ can be chosen to be a $t$-star if every inclusion-maximal member of $\mathcal{D}$ has size at least $Ctk\log^2(2k)$, for an absolute constant $C$. More recently, Looney et al.~\cite{LMKWPW2026} gave a computer-assisted proof of the conjecture for $n\le8$.

Our approach uses the correlation formulation of Friedgut, Kahn, Kalai and Keller~\cite{FKKK2018correlation}. Equip $\cube^n$ with the uniform probability measure. For Boolean functions $f,g:\cube^n\to \cube$, we write $\Cov(f,g)=\E[fg]-\E[f]\E[g]$ and $\Inf_i[f]=\Prb[f(X)\ne f(X\oplus \mathbf{1}_{\{i\}})]$, where $X$ is uniform on $\{0,1\}^n$, $\mathbf{1}_{\{i\}}$ is the indicator vector of $\{i\}$, and $\oplus$ denotes addition modulo $2$. A function is \emph{increasing} if it is nondecreasing in each coordinate. The \emph{dual} of $g$ is $g^*(x)=1-g(1-x)$; the function $g$ is \emph{antipodal} if $g=g^*$. We use the Fourier convention $\hat g(S)=\E[g(X)\chi_S(X)]$, where $\chi_S(x)=(-1)^{\sum_{i\in S}x_i}$. The main analytic result is the following.

\begin{theorem}\label{thm:harmonic}
Let $f,g:\cube^n\to\cube$ be increasing. Then\footnote{The right-hand side is defined to be zero when the denominator vanishes.}
$$
\sum_{\varnothing\ne S\subseteq[n]}\hat g(S)^2\max_{i\in S}\Inf_i[f]
\le
\frac{2\Cov(f,g)\Cov(f,g^*)}{\Cov(f,g)+\Cov(f,g^*)}.
\eqno\eqtag{eq:harmonic}
$$
\end{theorem}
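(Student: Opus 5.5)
The first step is to rewrite the right-hand side of \eqref{eq:harmonic} in Fourier terms. For $S\neq\varnothing$ one checks directly that $\widehat{g^*}(S)=-(-1)^{|S|}\hat g(S)$. So $\widehat{g^*}$ agrees with $\hat g$ on odd $S$ and with $-\hat g$ on even nonempty $S$. By Plancherel, $\Cov(f,g)=\sum_{S\ne\varnothing}\hat f(S)\hat g(S)$. Writing
$$
A=\sum_{|S|\ \text{odd}}\hat f(S)\hat g(S),\qquad B=\sum_{|S|\ \text{even},\,S\ne\varnothing}\hat f(S)\hat g(S),
$$
we get $\Cov(f,g)=A+B$ and $\Cov(f,g^*)=A-B$. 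Since $g^*$ is increasing whenever $g$ is, Harris' inequality makes both covariances nonnegative, so $A\ge|B|$. The harmonic mean then becomes $(A^2-B^2)/A$. Writing $L$ for the left-hand side of \eqref{eq:harmonic}, the goal is
$$
B^2\le A(A-L).
$$
(When $A=0$, both covariances vanish, and we must show $L=0$; this should follow from the same argument.) In function language, put $h=(g+g^*)/2$ and $k=(g-g^*)/2$. Then $h$ is increasing with values in $\{0,\tfrac12,1\}$ and carries the odd part of $g$. The function $k$ carries the even part, takes values in $\{-\tfrac12,0,\tfrac12\}$, and is supported exactly where $h=\tfrac12$. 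We have $A=\Cov(f,h)$ and $B=\E[fk]$.

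The inequality $B^2\le A(A-L)$ has the shape of a Cauchy--Schwarz inequality, and the plan is to produce it as one. I would look for a positive semidefinite quadratic form $Q$ on functions such that $Q(h,h)\le A-L$ in the appropriate normalisation, $Q(k,k)\le A$, and $|B|=|Q(h,k)|$. Natural candidates come from the derivative representation of covariance for monotone functions: $\Cov(f,\varphi)=\tfrac14\sum_i\E[D_if\,D_i\varphi]$ along a noise interpolation, where $D_if\ge0$ for increasing $f$. The weight $\max_{i\in S}\Inf_i[f]$ suggests the following device. Order the coordinates by influence, and assign each Fourier character $\chi_S$ to the coordinate $i\in S$ of largest influence. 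Then $D_if$ acts as a nonnegative weight on the part of $g$ "owned" by $i$, and $\E[D_if]=\Inf_i[f]$ together with $0\le D_if\le 1$ gives the control needed to compare with $\hat g(S)^2\Inf_i[f]$. The odd/even splitting should interact with this through the involution $x\mapsto \mathbf 1-x$, which fixes $h-\tfrac12$ up to sign and negates $k$.

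The concrete steps are as follows. First, establish the Fourier identities and the reduction to $B^2\le A(A-L)$. Second, prove a per-coordinate inequality. For each $i$, let $g_i$ denote the sum of $\hat g(S)\chi_S$ over the sets $S$ assigned to $i$, and split it into odd and even parts. The inequality should bound the odd and even contributions to $A$ and $B$ from below and above in terms of $\Inf_i[f]\,\|g_i\|_2^2$, using monotonicity of $f$ via $D_if\in\{0,1\}$. Third, sum over $i$ and apply Cauchy--Schwarz across coordinates to assemble $B^2\le A(A-L)$.

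The main obstacle is the second step. Everything hinges on a genuinely new inequality linking $\hat f$ on odd and even levels to $\max_{i\in S}\Inf_i[f]$, and the naive bound $|\hat f(S)|\le \min_{i\in S}\Inf_i[f]$ points the wrong way. What I would try first is to exploit that $k$ lives on $\{h=\tfrac12\}$, so that $|k|\le \min(h,1-h)$ pointwise. Together with the FKG inequality applied to $f$ and to the increasing functions $h$ and $h\pm k$ (which are $g$ and $g^*$), this should yield the Cauchy--Schwarz form directly: $(A\pm B)\ge0$ are the covariances of $f$ with $g$ and with $g^*$, and the loss term $L$ must emerge as the deficit in the FKG inequality, measured by influences. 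Isolating that deficit sharply is where I expect most of the work to go.
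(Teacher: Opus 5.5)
\textbf{Verdict: there is a genuine gap.} The reduction you carry out is correct, but the step that carries the content of the theorem is missing.

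\textbf{What is correct.} Since $\widehat{g^*}(S)=-(-1)^{|S|}\hat g(S)$ for $S\neq\varnothing$, you get $\Cov(f,g)=A+B$ and $\Cov(f,g^*)=A-B$. The theorem is therefore equivalent to $B^2\le A(A-L)$. One small slip: $B=\Cov(f,k)$, not $\E[fk]$. The two differ by $\E[f](\E[g]-\tfrac12)$.

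\textbf{What is missing.} The reduction is only a restatement of the theorem, and the rest of the proposal is a plan without its central step. You never specify the form $Q$. You never state the per-coordinate inequality of your second step. Nothing you list produces the loss term $L$. Harris--FKG applied to $f$ with $g=h+k$ and $g^*=h-k$ gives only $A\pm B\ge 0$, which is the trivial part $|B|\le A$. Assigning each $\chi_S$ to its most influential coordinate and using $0\le D_if\le 1$ does not by itself relate $\hat f$ to $\max_{i\in S}\Inf_i[f]$ in the needed direction, as you note yourself. There is also a constraint your plan does not address. Equality holds for $f=\prod_i x_i$ and every increasing $g$. So any per-coordinate bound followed by Cauchy--Schwarz across coordinates would have to be lossless in that case.

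\textbf{How the paper realises your Cauchy--Schwarz shape.} The paper proves, for every real $t$,
\[
\tfrac12 W\le t^2(A+B)+(1-t)^2(A-B),\qquad W:=\sum_{T}\hat g(T)^2\,\E_x\bigl[(f(x)-f(x\oplus\mathbf{1}_T))^2\bigr].
\]
For $A>0$, minimising over $t$ yields exactly $B^2\le A(A-W)$. This is your inequality with $L$ replaced by $W$. Two ideas are needed to get there.

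\emph{First idea: the flip bound.} The max-influence weight is handled not by ownership of coordinates but as follows. For $i\in T$, use the identity $\Inf_i[f]=2\E[(2x_i-1)f(x)]$ and the substitution $x\mapsto x\oplus\mathbf{1}_T$. This gives $\Inf_i[f]\le\E\,|f(x)-f(x\oplus\mathbf{1}_T)|$. Hence $L\le W$, and the loss term becomes quadratic in $f$. Moreover, with $\mathcal F=\{f=1\}$,
\[
W=2^{1-n}\sum_{x\in\mathcal F,\,y\notin\mathcal F}\hat g(x\oplus y)^2 .
\]

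\emph{Second idea: shift, Parseval, and Bessel.} Only nonempty indices $x\oplus y$ occur in this sum, so $g$ may be replaced by $g-t$. Parseval then rewrites the sum as
\[
|\mathcal F|\,\|g-t\|_2^2-\sum_{x,y\in\mathcal F}\widehat{g-t}(x\oplus y)^2 .
\]
The last sum is bounded below by $t^2|\mathcal F\setminus\mathcal G|+(1-t)^2|\mathcal F\setminus\mathcal G^*|$. This comes from Bessel's inequality for $H_y(x)=2^n\mathbbm{1}_{\mathcal F}(x)\widehat{g-t}(x\oplus y)$, tested against orthonormalised versions of the monomials $M_S$ for $S\in\mathcal F\setminus\mathcal G$ and $\chi_{[n]}M_S$ for $S\in\mathcal F\setminus\mathcal G^*$. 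These test functions are supported in $\mathcal F$. Their Fourier supports lie in $2^{[n]}\setminus\mathcal G$ and in $\mathcal G$ respectively, so $g-t$ is constant on their spectra. This Bessel step is where the deficit you were looking for comes from, and your proposal contains no substitute for it.
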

Both covariances are nonnegative by the Harris--Kleitman inequality~\cite{Harris1960,Kleitman1966}, so the right-hand side is their harmonic mean. If $g$ is antipodal, then $g=g^*$ and $\sum_{\varnothing\ne S\subseteq[n]}\hat g(S)^2=\Var(g)=\frac{1}{4}$. We therefore obtain the following consequence.

\begin{cor}[conjectured by Friedgut--Kahn--Kalai--Keller~\cite{FKKK2018correlation}]
\label{conj:FKKK-antipodal}
Let $f,g:\{0,1\}^n\to\{0,1\}$ be increasing, and suppose that $g$ is antipodal. Then
\begin{equation}\label{eq:antipodal-consequence}
\Cov(f,g)
\ge\frac14\min_{i\in[n]}\Inf_i[f].
\end{equation}    
\end{cor}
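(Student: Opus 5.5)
The corollary follows from Theorem~\ref{thm:harmonic} by specialising to $g=g^*$ and bounding the left-hand side from below.

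\textbf{Right-hand side.} With $g=g^*$ the two covariances coincide; write $c=\Cov(f,g)\ge 0$. If $c>0$, the right-hand side of \eqref{eq:harmonic} equals $2c^2/(2c)=c$. If $c=0$, the denominator vanishes and the right-hand side is $0=c$ by the footnote convention. In either case the theorem gives
$$
\sum_{\varnothing\ne S}\hat g(S)^2\max_{i\in S}\Inf_i[f]\le \Cov(f,g).
$$

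\textbf{Left-hand side.} Set $m=\min_{i\in[n]}\Inf_i[f]$. Every nonempty $S$ satisfies $\max_{i\in S}\Inf_i[f]\ge m$, and $\hat g(S)^2\ge 0$. Hence the left-hand side is at least
$$
m\sum_{S\ne\varnothing}\hat g(S)^2=m\,\Var(g).
$$

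\textbf{Variance of an antipodal $g$.} It remains to check $\Var(g)=\tfrac14$, which reduces to $\E[g]=\tfrac12$. The map $x\mapsto 1-x$ preserves the uniform measure, so antipodality gives
$$
\E[g]=\E[g^*]=1-\E[g(1-X)]=1-\E[g].
$$
Thus $\E[g]=\tfrac12$. Since $g$ is Boolean, $\Var(g)=\E[g]-\E[g]^2=\tfrac14$. By Parseval this is exactly $\sum_{S\ne\varnothing}\hat g(S)^2$.

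\textbf{Conclusion.} Combining the two bounds gives $\tfrac14 m\le \Cov(f,g)$, which is \eqref{eq:antipodal-consequence}. The only point needing care is the degenerate case $c=0$, and the footnote convention handles it.
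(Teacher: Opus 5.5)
Your proposal is correct and follows the paper's proof essentially verbatim. You specialise Theorem~\ref{thm:harmonic} to $g=g^*$, so the harmonic mean collapses to $\Cov(f,g)$, and then bound the spectral sum below by $\min_i\Inf_i[f]\cdot\Var(g)=\frac14\min_i\Inf_i[f]$; your explicit handling of the degenerate case $\Cov(f,g)=0$ and your derivation of $\E[g]=\frac12$ just fill in details the paper states without comment.
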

Friedgut, Kahn, Kalai and Keller showed that this inequality is equivalent to Chvátal's conjecture~\cite[Proposition 2.1]{FKKK2018correlation}. For completeness, we give the counting argument in Section~\ref{sec:known-equivalence}.

Duality provides a natural way to capture the symmetry of antipodality in a more general setting as if $g$ is antipodal, $\Cov(f,g)$ and $\Cov(f,g^*)$ coincide. Theorem~\ref{thm:harmonic} follows from the following stronger inequality. The parameter $t$ balances the contributions of $g$ and its dual. 

\begin{theorem}\label{thm:cross}
Let $f,g:\cube^n\to\cube$ be increasing. For every $t\in\mathbb R$,
$$
2\sum_{\substack{S,T\subseteq[n]\\|S\cap T|\text{ odd}}}
\hat f(S)^2\hat g(T)^2
\le t^2\Cov(f,g)+(1-t)^2\Cov(f,g^*).
\eqno\eqtag{eq:cross-parameter}
$$
\end{theorem}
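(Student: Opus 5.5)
The plan is to first rewrite the left-hand side of \eqref{eq:cross-parameter} in physical space, and then prove the resulting inequality by induction on $n$, splitting on the last coordinate. Since $2\cdot\mathbbm{1}[|S\cap T|\text{ odd}]=1-(-1)^{|S\cap T|}$ and $(-1)^{|S\cap T|}=\chi_S(\mathbf 1_T)$, the autocorrelation $A_f(y)=\E_x[f(x)f(x\oplus y)]$ has Fourier coefficients $\hat A_f(S)=\hat f(S)^2$. Summing over $S$ first therefore gives
$$
2\sum_{|S\cap T|\text{ odd}}\hat f(S)^2\hat g(T)^2=\sum_{T}\hat g(T)^2\bigl(A_f(0)-A_f(\mathbf 1_T)\bigr)=\frac12\sum_{T\neq\varnothing}\hat g(T)^2\,\Prb\bigl[f(X)\neq f(X\oplus\mathbf 1_T)\bigr],
$$
using that $f$ is Boolean. (This form also explains how Theorem~\ref{thm:harmonic} will follow: by monotonicity one should have $\Prb[f(X)\ne f(X\oplus\mathbf 1_T)]\ge\max_{i\in T}\Inf_i[f]$, and then one optimizes over $t$.)

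On the right-hand side I will use $\widehat{g^*}(S)=-(-1)^{|S|}\hat g(S)$ for $S\neq\varnothing$. This gives
$$
t^2\Cov(f,g)+(1-t)^2\Cov(f,g^*)=\sum_{S\ne\varnothing}\bigl(t^2-(1-t)^2(-1)^{|S|}\bigr)\hat f(S)\hat g(S).
$$
Equivalently, this is a mix of $\E[f(X)g(X)]$ and $\E[f(X)(1-g(\mathbf 1-X))]$, each centred.

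The main step is the induction on $n$. Write $f_0,f_1$ and $g_0,g_1$ for the restrictions with $x_n=0,1$; these are increasing, with $f_0\le f_1$ and $g_0\le g_1$. Three decompositions are needed.
\begin{enumerate}[(a)]
\item The left-hand side splits into a sum over $T\not\ni n$ and a sum over $T\ni n$. The coefficients are $\hat g(T)=\frac12(\hat g_0(T)+\hat g_1(T))$ for $T\not\ni n$ and $\hat g(T\cup\{n\})=\frac12(\hat g_0(T)-\hat g_1(T))$. The flip probabilities split as averages of $\Prb[f_a(X')\ne f_b(X'\oplus\mathbf 1_T)]$.
\item The covariances decompose as $\Cov(f,g)=\frac12\Cov(f_0,g_0)+\frac12\Cov(f_1,g_1)+\frac14(\E f_1-\E f_0)(\E g_1-\E g_0)$.
\item The dual restricts as $(g^*)_0=(g_1)^*$ and $(g^*)_1=(g_0)^*$, so $\Cov(f,g^*)$ pairs $f_0$ with $g_1^*$ and $f_1$ with $g_0^*$.
\end{enumerate}
I would apply the induction hypothesis to the four pairs $(f_a,g_b)$. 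Each application may use its own parameter $t_{ab}$, and the freedom in $t$ is exactly what should make the induction close. The leftover cross terms then have to be absorbed by the nonnegative products $(\E f_1-\E f_0)(\E g_1-\E g_0)$, which carry the weight $t^2+(1-t)^2$. The absorption should use a Cauchy--Schwarz/AM--GM estimate of the form $2ab\le \lambda a^2+\lambda^{-1}b^2$, together with the Harris--Kleitman inequality for the mixed pairs.

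The base case $n=1$ is a direct check over the increasing functions on one bit.

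I expect the main obstacle to be the mixed terms from (a), namely those involving $\Prb[f_0(X')\ne f_1(X'\oplus\mathbf 1_T)]$ and the products $\hat g_0(T)\hat g_1(T)$. These are not obviously controlled by the induction hypothesis for the diagonal pairs. My first attempt would be to bound them with the monotone coupling $f_0\le f_1$. Concretely, $\mathbbm 1[f_0(x)\ne f_1(y)]\le \mathbbm 1[f_0(x)\ne f_0(y)]+(f_1-f_0)(y)$, with the same bound using $f_1$ in place of $f_0$. The extra term $\E[f_1-f_0]$ is what the $(\E f_1-\E f_0)(\E g_1-\E g_0)$ contributions in (b) are meant to pay for.

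If this bookkeeping does not close for a fixed choice of the $t_{ab}$, the fallback is to strengthen the statement being induced on. One would carry a weighted version with separate weights on $g$ and $g^*$ (that is, the two-parameter form $\alpha^2\Cov(f,g)+\beta^2\Cov(f,g^*)$ with $\alpha+\beta=1$ replaced by a more flexible constraint), so that the hypothesis is stable under the splitting in (a)--(c).
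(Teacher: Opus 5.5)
Your physical-space rewriting of the left side is correct and is the paper's own first step. The Fourier form of the right side and the restriction identities (a)--(c) are also right. Everything after that, however, is a plan rather than a proof. The inductive step is never carried out, the mixed terms you call the main obstacle are left unresolved, and the fallback hypothesis is not specified. The tools you name cannot close the step, because the inequality has no slack: at the optimal $t$ it is an equality for $f=\mathrm{AND}_n$ and every $g$.

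To see the loss, write $L(f,g)$ for the left side and $p_{ab}(T)=\Prb[f_a(X')\ne f_b(X'\oplus\mathbf 1_T)]$, noting that $p_{01}=p_{10}=:q$. Also set $\delta_g=g_1-g_0$, $\Delta f=\E f_1-\E f_0$ and $\Delta g=\E g_1-\E g_0$. Your splitting (a) gives the exact identity
\[
L(f,g)=\frac14\sum_{a,b\in\{0,1\}}L(f_a,g_b)+\frac1{16}\sum_{T\subseteq[n-1]}\widehat{\delta_g}(T)^2\bigl(2q(T)-p_{00}(T)-p_{11}(T)\bigr).
\]
Your coupling bound $q\le p_{aa}+\Delta f$ replaces the remainder by $\frac18\Delta f\,\Delta g$. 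No smaller multiple $\gamma\,\Delta f\,\Delta g$ works in general: for $f=x_n$ all four $L(f_a,g_b)$ vanish, while $L(f,g)=\frac18\Delta g$ and $\Delta f=1$.

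Now take $n=3$, $f=x_1x_2x_3$ and $g=x_1\vee(x_2\wedge x_3)$, and split on $x_3$. Then $f_0=0$, $f_1=x_1x_2$, $g_0=x_1$ and $g_1=x_1\vee x_2$. Here $\Cov(f,g)=3/64$ and $\Cov(f,g^*)=5/64$, so at $t=5/8$ both sides of \eqref{eq:cross-parameter} equal $15/512$. On the other hand $L(f_1,g_0)=1/16$, $L(f_1,g_1)=3/64$ and $L(f_0,g_b)=0$, so $\frac14\sum L(f_a,g_b)=14/512$. The true remainder is only $1/512$, because $2q-p_{00}-p_{11}$ vanishes for $T\ne\varnothing$, whereas $\frac18\Delta f\,\Delta g=4/512$. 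The induction hypothesis can only bound each $L(f_a,g_b)$ from above, so your chain yields at least $18/512>15/512$, whatever parameters $t_{ab}$ you use.

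Using only the $f_0$-version of the coupling is exact here. But then the cross products $\hat g_0(T)\hat g_1(T)$ must be split by Cauchy--Schwarz, and the result is at least $\frac18\bigl(\sqrt{L(f_1,g_0)}+\sqrt{L(f_1,g_1)}\bigr)^2+4/512\approx 17.9/512$. A working induction would need an essentially exact bound on the remainder. It would have to be paid for by the gain from optimizing $t$ separately in each sub-pair, which in this example is exactly $1/512$. Nothing in the proposal supplies this.

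The paper avoids induction altogether. After the same rewriting, it replaces $g$ by $g-t$, which is harmless because $x\oplus y\ne\varnothing$. Parseval then turns the sum into $|\mathcal F|\,\|g-t\|_2^2-\sum_{x,y\in\mathcal F}\widehat{g-t}(x\oplus y)^2$. The last sum is bounded below by Bessel's inequality against the orthonormalized monomials $M_S$ for $S\in\mathcal F\setminus\mathcal G$ and $\chi_{[n]}M_S$ for $S\in\mathcal F\setminus\mathcal G^*$. These are supported in $\mathcal F$, with Fourier supports in $2^{[n]}\setminus\mathcal G$ and in $\mathcal G$ respectively. This gives exactly $t^2|\mathcal F\setminus\mathcal G|+(1-t)^2|\mathcal F\setminus\mathcal G^*|$.
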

To deduce Theorem~\ref{thm:harmonic}, we first use Lemma~\ref{lem:flip} below to obtain
$$
\sum_{\varnothing\ne T\subseteq[n]}
\hat g(T)^2\max_{i\in T}\Inf_i[f]
\le
4\sum_{\substack{S,T\subseteq[n]\\|S\cap T|\text{ odd}}}
\hat f(S)^2\hat g(T)^2.
\eqno\eqtag{eq:influence-to-parity}
$$
If $\Cov(f,g)+\Cov(f,g^*)>0$, the right-hand side of \eqref{eq:cross-parameter} is minimized at
$$
t=\frac{\Cov(f,g^*)}{\Cov(f,g)+\Cov(f,g^*)},
$$
with minimum value $\Cov(f,g)\Cov(f,g^*)/(\Cov(f,g)+\Cov(f,g^*))$.
Multiplying the resulting bound by two and applying \eqref{eq:influence-to-parity} gives \eqref{eq:harmonic}. The inequality is sharp: taking $f(x)=\prod_{i=1}^n x_i$ gives equality in~\eqref{eq:harmonic} for every increasing Boolean $g$. Section~\ref{sec:consequences} gives the calculation and further consequences.

\paragraph{Relation to earlier correlation inequalities.} Talagrand~\cite{Talagrand1996correlated} initiated the quantitative study of positive correlation between increasing families. Keller, Mossel and Sen~\cite{KMS2014correlation} obtained a complementary coordinatewise influence bound, and Kalai, Keller and
Mossel~\cite{KKM2016correlation} established refinements under regularity and symmetry assumptions. Eldan~\cite{Eldan2022} developed second-order Fourier refinements of the Talagrand and Keller--Mossel--Sen bounds. In particular, when one function is antipodal, his result improves the logarithmic loss in Talagrand's inequality to its square root. In a spectral direction, Chang and Chen~\cite{CC2027}
established log-free influence and diagonal spectral inequalities under submodularity or supermodularity assumptions. Chang~\cite{Chang2026spectral} obtained a general diagonal spectral
bound, and Chang, Liu and Liu~\cite{CLL2026spectral} subsequently
proved the sharp diagonal inequality using a correlated induction argument.

\paragraph{Proof overview.}
Write $\mathcal F=\{f=1\}$, $\mathcal G=\{g=1\}$, and $\mathcal G^*=\{g^*=1\}$, identifying cube points with subsets of $[n]$. The main difficulty in Theorem~\ref{thm:cross} is the mixture of two Fourier spectra under the odd-intersection condition. The Fourier identity in Lemma~\ref{lem:flip} and the fact that $f$ is Boolean-valued gives
$$
2\sum_{\substack{S,T\subseteq[n]\\|S\cap T|\text{ odd}}}\hat{f}(S)^2\hat{g}(T)^2=\frac{1}{2^n}\sum_{\substack{x\in\mathcal{F}\\y\notin\mathcal{F}}}\hat{g}(x\oplus y)^2.
$$
This removes the odd-intersection restriction and retains $f$ only through the summation domain. Moreover, every index $x\oplus y$ in this sum is nonempty. Replacing $g$ by $g-t$ therefore leaves the sum unchanged, allowing us to introduce the free parameter from the desired covariance bound.

Parseval's identity gives
\begin{equation}\label{eq:Fourier-decomp}
\sum_{\substack{x\in\mathcal{F}\\y\notin\mathcal{F}}}\hat{g}(x\oplus y)^2
=|\mathcal{F}|\|g-t\|_2^2-\sum_{x,y\in\mathcal{F}}\widehat{g-t}(x\oplus y)^2.
\end{equation}
Hence it suffices to find a lower bound for $\sum_{x,y\in\mathcal{F}}\widehat{g-t}(x\oplus y)^2$. We seek auxiliary functions supported on $\mathcal F$ whose Fourier supports lie entirely outside or inside $\mathcal G$, thereby separating the two values $-t$ and $1-t$ of $g-t$. The monomials $M_S(x)=\prod_{i\in S}x_i$ are suited to these requirements: their supports consist of up-sets of $S$, whereas their Fourier supports consist of subsets of $S$. Monotonicity thus gives the first family $\{M_S:S\in\mathcal F\setminus\mathcal G\}$. Multiplication by $\chi_{[n]}$ preserves support and complements Fourier indices, giving the second family $\{\chi_{[n]}M_S:S\in\mathcal F\setminus\mathcal G^*\}$ with Fourier support inside $\mathcal G$.

Each family is linearly independent, and their disjoint Fourier supports make the two families mutually orthogonal. Gram--Schmidt orthogonalization preserves all these support conditions. Applying Bessel's inequality to $H_y(x)=2^n f(x)\widehat{g-t}(x\oplus y)$ with the resulting orthonormal system, and summing over $y\in\mathcal F$, yields
$$
\sum_{x,y\in\mathcal F}\widehat{g-t}(x\oplus y)^2
\ge
t^2|\mathcal F\setminus\mathcal G|
+
(1-t)^2|\mathcal F\setminus\mathcal G^*|.
$$
Substituting this bound into~\eqref{eq:Fourier-decomp} gives precisely $t^2\Cov(f,g)+(1-t)^2\Cov(f,g^*)$, proving~\eqref{eq:cross-parameter}.

\medskip
\noindent\emph{Organization.}
Section~\ref{sec:preliminaries} reviews Fourier analysis on the discrete
cube and Bessel's inequality. Section~\ref{sec:proofs} proves Theorem~\ref{thm:cross}, Theorem~\ref{thm:harmonic} and Corollary~\ref{conj:FKKK-antipodal}. Section~\ref{sec:known-equivalence} recalls the equivalence between
Chv\'atal's conjecture and Corollary~\ref{conj:FKKK-antipodal}. Section~\ref{sec:consequences} discusses sharpness and further consequences.

\section{Preliminaries}\label{sec:preliminaries}

We identify a point $x\in\cube^n$ with the set $\{i\in [n]:x_i=1\}$. In particular, $g(S)$ denotes the value of $g$ at the indicator vector of $S$, whereas $\hat g(S)$ is its Fourier
coefficient. For $T\subseteq[n]$, let $\mathbf{1}_T$ be its indicator vector. The expression $x\oplus \mathbf{1}_T$ denotes flipping all coordinates of $x$ in $T$,
so $x\oplus y$ corresponds to symmetric difference. The complement of $S\subseteq[n]$ is $[n]\setminus S$, and $1-x$ denotes the coordinatewise complement of $x$.

For real-valued functions $f,g:\cube^n\to\mathbb{R}$, let $\tup{f,g}=\E[fg]$ and $\|f\|_2^2=\E[f^2]$. For $S\subseteq[n]$, define the Fourier--Walsh character by
$\chi_S(x):=(-1)^{\sum_{i\in S}x_i}.$
The family $\{\chi_S\}_{S\subseteq[n]}$ is an orthonormal basis of $L^2(\cube^n)$. The Fourier--Walsh expansion of $f:\cube^n\to\mathbb R$ is
\[
f(x)=\sum_{S\subseteq[n]}\hat f(S)\chi_S(x),
\qquad
\hat f(S)=\tup{f,\chi_S}.
\]
For more background on the Fourier–Walsh expansion the reader is referred to~\cite{ryanbook2014}. 

Parseval’s identity gives
$$
\tup{f,g}=\sum_{S\subseteq[n]}\hat f(S)\hat g(S).
$$
In particular, $\|f\|_2^2=\sum_S\hat f(S)^2$ and $\Var(f)=\sum_{S\ne\varnothing}\hat f(S)^2$. 

The support of $f$ is $\supp f=\{x\in \{0,1\}^n:f(x)\ne0\}$, and its Fourier support is $\supp\hat{f}=\{S\subseteq [n]:\hat{f}(S)\ne0\}$. A Boolean function is identified with its support family. If $g$ is increasing, then its dual $g^*$ is increasing and $\E[g^*]=1-\E[g]$. The corresponding dual family is $\mathcal{G}^*=\{S\subseteq[n]:[n]\setminus S\notin\mathcal G\}$.

We will also use the elementary influence identity
$$
\Inf_i[f]=2\E[(2X_i-1)f(X)]
\eqno\eqtag{eq:influence-sign}
$$
for an increasing Boolean function $f$. Here $X$ is uniform on $\{0,1\}^n$. Write $x_{-i}$ for the coordinates of $x$ other than $i$, and let $f(x_{-i},b)$ denote the value obtained by setting the $i$th coordinate to $b$. Indeed, after fixing the other coordinates, $f(x_{-i},1)-f(x_{-i},0)$ is either $0$ or $1$. Its expectation is both $\Inf_i[f]$ and the right-hand side of~\eqref{eq:influence-sign}. 

We next recall the classical finite-dimensional form of Bessel's inequality for real-valued functions. 

\begin{theorem}[Bessel's inequality]\label{thm:bessel}
Let $\phi_1,\ldots,\phi_m$ be orthonormal real-valued functions on $\cube^n$. For every real-valued function $h$ on $\cube^n$,
$$
\sum_{j=1}^m\tup{h,\phi_j}^2\le\|h\|_2^2.
$$
The assertion also holds for $m=0$, when the sum is zero.
\end{theorem}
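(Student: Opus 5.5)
This is the finite-dimensional Bessel inequality, and I will prove it by orthogonal decomposition in the inner product space $L^2(\cube^n)$ with $\tup{\cdot,\cdot}$. If $m=0$ the left-hand side is an empty sum, equal to $0\le\|h\|_2^2$, so there is nothing to prove. For $m\ge1$, set $c_j=\tup{h,\phi_j}$ and
\[
p=\sum_{j=1}^m c_j\phi_j,\qquad r=h-p .
\]
Here $p$ is the orthogonal projection of $h$ onto the span of the $\phi_j$.

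First I check that $r\perp\phi_k$ for each $k$. By bilinearity and orthonormality, $\tup{p,\phi_k}=\sum_j c_j\tup{\phi_j,\phi_k}=c_k$. Hence $\tup{r,\phi_k}=c_k-c_k=0$, and by linearity $\tup{r,p}=0$.

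Next I expand the norm of $h=p+r$:
\[
\|h\|_2^2=\|p\|_2^2+2\tup{p,r}+\|r\|_2^2=\|p\|_2^2+\|r\|_2^2\ge\|p\|_2^2 .
\]
Orthonormality again gives $\|p\|_2^2=\sum_{j,k}c_jc_k\tup{\phi_j,\phi_k}=\sum_j c_j^2$, which is the claimed inequality.

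Every step is a routine bilinear expansion, so I expect no real obstacle. The only point needing care is using orthonormality in both places where it enters: to compute $\tup{p,\phi_k}$ and to compute $\|p\|_2^2$. Nothing specific to the cube is used beyond the fact that $\tup{\cdot,\cdot}$ is a real inner product.
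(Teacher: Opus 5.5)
Your proof is correct: it is the standard orthogonal-projection argument. The paper gives no proof of its own and simply recalls Bessel's inequality as a classical fact for the real inner product $\tup{f,g}=\E[fg]$ on $\cube^n$, which is the only structure your argument uses.
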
 

\section{Proofs of the main results}\label{sec:proofs}

We first construct the auxiliary functions used in the proof of Theorem~\ref{thm:cross}. We then prove that theorem and deduce Theorem~\ref{thm:harmonic} directly from it. 

\subsection{Auxiliary functions}\label{subsec:auxiliary}

Let $f,g$ be increasing Boolean functions, and write $\mathcal F=\supp f$, $\mathcal G=\supp g$ and $\mathcal G^*=\supp g^*$. Fix $t\in \mathbb{R}$. The proof of Theorem~\ref{thm:cross} will reduce to a lower bound for
$
\sum_{x,y\in\mathcal F}\widehat{g-t}(x\oplus y)^2.
$
For each $y\in\mathcal F$, define $H_y(x)=2^nf(x)\widehat{g-t}(x\oplus y)$. Then 
\begin{equation}\label{eq:kernel-vector}
\sum_{x,y\in\mathcal F}\widehat{g-t}(x\oplus y)^2
=\frac1{2^n}\sum_{y\in\mathcal F}\|H_y\|_2^2.
\end{equation}
Thus we need a lower bound for a sum of squared norms. Bessel's inequality gives such a bound by testing against orthonormal functions. Our task is to find enough test functions for which the inner products with $H_y$ can be computed exactly. 

For any real-valued function $h$ supported on $\mathcal{F}$, expanding the Fourier coefficient gives
\begin{equation*}
\begin{aligned}
\tup{H_y,h}=\sum_{x\in\mathcal F}h(x)\widehat{g-t}(x\oplus y)&=\sum_{x\in \mathcal F}h(x)\E_{S}\left[(g(S)-t)\chi_{x\oplus y}(S)\right] \\
&=\frac1{2^n}\sum_{S\subseteq[n]}(g(S)-t)\chi_S(y)\sum_{x\in\mathcal F}h(x)\chi_S(x)\\
&=\sum_{S\subseteq[n]}(g(S)-t)\hat h(S)\chi_S(y).
\end{aligned}
\end{equation*}
Here we used $\chi_{x\oplus y}(S)=\chi_S(x)\chi_S(y)$ and the fact that $h$ vanishes outside $\mathcal{F}$. Fourier inversion therefore yields
\begin{equation}\label{eq:aux-testing}
\tup{H_y,h}=
\begin{cases}
-t\,h(y),&\text{if }\supp\hat h\subseteq 2^{[n]}\setminus\mathcal{G},\\[2pt]
(1-t)h(y),&\text{if }\supp\hat h\subseteq\mathcal{G}.
\end{cases}
\end{equation}
This explains both support requirements: the support keeps $h$ inside $\mathcal{F}$, while the Fourier support condition makes $g-t$ constant on $\supp \hat{h}$, with value $-t$ or $1-t$. 

The required functions come from monomials. For $S\subseteq[n]$, define  $M_S(x)=\prod_{i\in S}x_i$, with $M_\varnothing=1$. Equivalently, $M_S(x)=\mathbbm{1}_{\{S\subseteq x\}}$. Since $x_i=\frac{1-\chi_{\{i\}}(x)}{2}$, its Fourier expansion is
$$
M_S(x)=2^{-|S|}\sum_{T\subseteq S}(-1)^{|T|}\chi_T(x).
\eqno\eqtag{eq:monomial-fourier}
$$
In particular,
$$
\supp M_S=\{x\in \{0,1\}^n:S\subseteq x\},
\qquad
\supp \widehat{M_S}=\{T\subseteq [n]:T\subseteq S\}. 
$$
If $S\in \mathcal{F}\setminus\mathcal{G}$, upward closure of $\mathcal{F}$ gives $\supp M_S\subseteq \mathcal{F}$, whereas the downward closure of $2^{[n]}\setminus \mathcal{G}$ gives  $\supp \widehat{M_S}\subseteq 2^{[n]}\setminus \mathcal G$. 

Multiplication by $\chi_{[n]}$ leaves the support unchanged and replaces each Fourier index by its complement. More explicitly,
$$
\chi_{[n]}M_S
=2^{-|S|}\sum_{T\subseteq S}(-1)^{|T|}\chi_{[n]\setminus T},
\qquad
\supp \widehat{\chi_{[n]}M_S}=\{R\subseteq [n]:[n]\setminus S\subseteq R\}.
$$
When $S\in \mathcal{F}\setminus \mathcal{G}^*$, we have $[n]\setminus S\in \mathcal{G}$. The upward closure of $\mathcal{G}$ gives $\supp \widehat{\chi_{[n]}M_S}\subseteq\mathcal{G}$, while $\supp (\chi_{[n]}M_S)\subseteq \mathcal{F}$.

\begin{lemma}\label{lem:auxiliary-families}
Let $\mathcal F,\mathcal G\subseteq2^{[n]}$ be increasing families, and let $\mathcal G^*$ be the dual of $\mathcal G$. Define the following two families of real-valued functions:
$$
\mathcal F'=\{M_S:S\in\mathcal F\setminus\mathcal G\},
\qquad
\mathcal G'=\{\chi_{[n]}M_S:S\in\mathcal F\setminus\mathcal G^*\}.
$$
They have the following properties.
\begin{enumerate}[label=\textup{(\roman*)},leftmargin=2.2em]
\item Every $h\in\mathcal F'$ satisfies $\supp h\subseteq\mathcal F$ and $\supp\hat h\subseteq2^{[n]}\setminus\mathcal G$.
\item Every $h\in\mathcal G'$ satisfies $\supp h\subseteq\mathcal F$ and $\supp\hat h\subseteq\mathcal G$.
\item Each of $\mathcal F'$ and $\mathcal G'$ is linearly independent.
\item Every member of $\mathcal F'$ is orthogonal to every member of $\mathcal G'$.
\end{enumerate}
\end{lemma}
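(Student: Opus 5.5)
Parts (i) and (ii) are already established in the discussion preceding the lemma, so the plan is to record them briefly and then prove (iii) and (iv).

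For (i), take $S\in\mathcal F\setminus\mathcal G$. By \eqref{eq:monomial-fourier}, $\supp M_S$ is the up-set of $S$, which lies in $\mathcal F$ because $\mathcal F$ is increasing. Also $\supp\widehat{M_S}=2^S$, and this lies in $2^{[n]}\setminus\mathcal G$ because the complement of an increasing family is decreasing and $S\notin\mathcal G$.

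For (ii), take $S\in\mathcal F\setminus\mathcal G^*$. Then $[n]\setminus S\in\mathcal G$ by the definition of $\mathcal G^*$. The Fourier support of $\chi_{[n]}M_S$ is the up-set of $[n]\setminus S$, which lies in $\mathcal G$ since $\mathcal G$ is increasing. Multiplying by $\chi_{[n]}$ does not change the support, so $\supp(\chi_{[n]}M_S)\subseteq\mathcal F$ as in (i).

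For (iii), I would use triangularity of monomials. Suppose $\sum_{S}c_SM_S=0$ over distinct sets $S$ with not all $c_S=0$. Choose $S_0$ minimal under inclusion among those with $c_{S_0}\neq0$, and evaluate at $x=S_0$. Since $M_S(S_0)=\mathbbm 1_{\{S\subseteq S_0\}}$, minimality means only $S=S_0$ survives, so $c_{S_0}=0$, a contradiction. Hence all $c_S=0$, and $\mathcal F'$ is independent. For $\mathcal G'$, multiplication by the nowhere-vanishing function $\chi_{[n]}$ is an invertible linear map, so it preserves linear independence. Alternatively, the Fourier coefficient of $M_S$ at $S$ itself is $(-1)^{|S|}2^{-|S|}\neq0$, and a maximal-element argument on the Fourier side gives the same conclusion.

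For (iv), Parseval reduces $\tup{h,h'}$ to $\sum_R\hat h(R)\hat{h'}(R)$. By (i) and (ii), the Fourier supports of $h\in\mathcal F'$ and $h'\in\mathcal G'$ lie in the disjoint sets $2^{[n]}\setminus\mathcal G$ and $\mathcal G$, so every term vanishes and $h\perp h'$.

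None of these steps is a real obstacle. The only point needing care is the minimality choice in (iii), together with checking that the duality $[n]\setminus S\in\mathcal G$ is used in the right direction in (ii).
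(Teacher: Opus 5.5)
Your proposal is correct and follows the paper's proof essentially step for step: (i), (ii) and (iv) are argued the same way, and for (iii) your evaluation at an inclusion-minimal $S_0$ is the same triangularity idea as the paper's evaluation at sets ordered by cardinality. In both, independence of $\mathcal G'$ is transferred from the monomials by multiplying through by $\chi_{[n]}$.
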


\begin{proof}
If $S\in\mathcal F$, every point in the support of $M_S$ contains $S$ and hence lies in $\mathcal F$. Multiplication by $\chi_{[n]}$, which takes values in $\{-1,1\}$, does not change its support.

If $S\notin\mathcal G$, every subset of $S$ also lies outside $\mathcal G$, because $\mathcal G$ is increasing. Equation~\eqref{eq:monomial-fourier} therefore proves the assertion about Fourier support in \textup{(i)}. For \textup{(ii)}, the condition $S\notin\mathcal G^*$ means $[n]\setminus S\in\mathcal G$. Since $\chi_{[n]}\chi_T=\chi_{[n]\setminus T}$, the Fourier indices of $\chi_{[n]}M_S$ are exactly $[n]\setminus T$ for $T\subseteq S$. Each such index contains $[n]\setminus S$ and therefore belongs to $\mathcal G$.

To prove independence, consider any distinct sets $S_1,\ldots,S_r$, ordered so that $|S_1|\le\cdots\le|S_r|$. Suppose $\sum_{j=1}^r c_jM_{S_j}=0$. At the point $S_1$, the only monomial among this list that is nonzero is $M_{S_1}$, so $c_1=0$. Inductively, suppose $c_1=\cdots=c_{k-1}=0$ and evaluate at $S_k$. If $j>k$, then $S_j\nsubseteq S_k$: its cardinality is at least $|S_k|$, and the sets are distinct. Thus only $c_kM_{S_k}(S_k)=c_k$ remains, giving $c_k=0$. This proves independence of every such monomial family, in particular $\mathcal F'$. Multiplying a relation among members of $\mathcal G'$ by $\chi_{[n]}$ reduces it to a relation among distinct monomials, proving independence of $\mathcal G'$.

Finally, the Fourier supports in \textup{(i)} and \textup{(ii)} are disjoint. Parseval's identity therefore gives $\tup{h,k}=\sum_T\hat h(T)\hat k(T)=0$ whenever $h\in\mathcal F'$ and $k\in\mathcal G'$. This proves \textup{(iv)}.
\end{proof}

We next pass to orthonormal families without changing either support condition. 
\begin{cor}\label{cor:orthonormal-families}
Under the hypotheses of Lemma~\ref{lem:auxiliary-families}, let $r=|\mathcal F\setminus\mathcal G|$ and $s=|\mathcal F\setminus\mathcal G^*|$. There exist real-valued functions $u_1,\ldots,u_r,v_1,\ldots,v_s$ such that:
\begin{enumerate}
[label=\textup{(\roman*)},leftmargin=2.2em]
\item every function is supported on $\mathcal F$;
\item $\supp\hat u_j\subseteq2^{[n]}\setminus\mathcal G$ for $1\le j\le r$, and $\supp\hat v_k\subseteq\mathcal G$ for $1\le k\le s$;
\item $u_1,\ldots,u_r,v_1,\ldots,v_s$
is orthonormal and each has unit norm. 
\end{enumerate}
\end{cor}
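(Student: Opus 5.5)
We apply Gram--Schmidt separately to the two families of Lemma~\ref{lem:auxiliary-families}. Enumerate $\mathcal F\setminus\mathcal G=\{S_1,\ldots,S_r\}$ and $\mathcal F\setminus\mathcal G^*=\{R_1,\ldots,R_s\}$. By Lemma~\ref{lem:auxiliary-families}(iii), the functions $M_{S_1},\ldots,M_{S_r}$ are linearly independent. Gram--Schmidt therefore produces orthonormal $u_1,\ldots,u_r$ in which each $u_j$ is a linear combination of $M_{S_1},\ldots,M_{S_j}$; no division by zero occurs precisely because of independence. The same procedure applied to $\chi_{[n]}M_{R_1},\ldots,\chi_{[n]}M_{R_s}$ yields orthonormal $v_1,\ldots,v_s$, each a linear combination of members of $\mathcal G'$. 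If $r=0$ or $s=0$ the corresponding list is empty and nothing needs to be done.

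The support conditions are preserved under linear combinations, and this is where the two properties differ. For supports in physical space, $\supp(\sum c_jh_j)\subseteq\bigcup_j\supp h_j$, so Lemma~\ref{lem:auxiliary-families}(i),(ii) give (i). For Fourier supports, the map $h\mapsto\hat h$ is linear, so $\supp\widehat{\sum c_jh_j}\subseteq\bigcup_j\supp\hat h_j$. This keeps $\supp\hat u_j$ inside $2^{[n]}\setminus\mathcal G$ and $\supp\hat v_k$ inside $\mathcal G$, which is (ii).

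For (iii), orthonormality within each list comes directly from Gram--Schmidt. It remains to show that each $u_j$ is orthogonal to each $v_k$. Since their Fourier supports lie in the disjoint sets $2^{[n]}\setminus\mathcal G$ and $\mathcal G$, Parseval gives $\tup{u_j,v_k}=\sum_T\hat u_j(T)\hat v_k(T)=0$. Equivalently, one can use bilinearity together with Lemma~\ref{lem:auxiliary-families}(iv). Concatenating the two lists therefore gives an orthonormal system.

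There is no real obstacle here. The only point needing care is that running Gram--Schmidt on each family separately, rather than on their union, is what keeps every output inside a single Fourier-support class. Cross-orthogonality then comes for free from the disjoint Fourier supports.
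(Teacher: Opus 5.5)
Your proposal is correct and takes essentially the same route as the paper: you apply Gram--Schmidt separately to $\mathcal F'$ and $\mathcal G'$, Lemma~\ref{lem:auxiliary-families}(iii) guarantees that every normalization is valid, and both support conditions are closed under linear combinations, which gives (i) and (ii). The only cosmetic difference is that you get cross-orthogonality directly from the disjoint Fourier supports of the $u_j$ and $v_k$ via Parseval, while the paper argues that the spans of $\mathcal F'$ and $\mathcal G'$ are orthogonal by Lemma~\ref{lem:auxiliary-families}(iv); these rest on the same fact.
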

\begin{proof}
    If $r>0$, enumerate $\mathcal F'$ as $a_1,\ldots,a_r$. Apply the Gram--Schmidt procedure by setting, for $1\le k\le r$, 
    $$w_k=a_k-\sum_{j=1}^{k-1}\tup{a_k,u_j}u_j, \qquad u_k=\frac{w_k}{\|w_k\|_2}, $$
    where the sum is empty when $k=1$. 
    We have $w_k\not= 0$; otherwise, $a_k$ would lie in the span of $a_1,\dots,a_{k-1}$, contrary to Lemma~\ref{lem:auxiliary-families}\textup{(iii)}. Thus all normalizations are valid. The resulting functions $u_1,\dots,u_r$ are pairwise orthogonal and have unit norm. Each $u_k$ is a linear combination of members of $\mathcal{F}'$, so Lemma~\ref{lem:auxiliary-families}\textup{(i)} ensures that $\supp u_k\subseteq\mathcal F$ and $\supp \hat{u}_k\subseteq 2^{[n]}\setminus \mathcal{G}$. If $r=0$, take the first list to be empty.  

    Applying the same procedure to $\mathcal{G}'$ gives an orthonormal list  $v_1,\dots,v_s$ satisfying the support conditions in Lemma~\ref{lem:auxiliary-families}\textup{(ii)}; the list is empty when $s=0$. By Lemma~\ref{lem:auxiliary-families}\textup{(iv)}, the spans of $\mathcal{F}'$ and $\mathcal{G}'$ are orthogonal. Since each $u_j$ belongs to the first span and each $v_k$ to the second, we have $\tup{u_j,v_k}=0$ for all $1\le j\le r$ and $1\le k\le s$. 
\end{proof}

\subsection{Proof of Theorem~\ref{thm:cross}}

\begin{proof}
Let $\mathcal F=\supp f$, $\mathcal G=\supp g$ and $\mathcal G^*=\supp g^*$. We prove~\eqref{eq:cross-parameter} for an arbitrary $t\in \mathbb{R}$. 

For every $T\subseteq [n]$, the identity $\chi_S(x\oplus \mathbf{1}_T)=(-1)^{|S\cap T|}\chi_S(x)$ and Parseval's identity give
\begin{equation}\label{eq:flip-fourier}
 \E_x\bigl[(f(x)-f(x\oplus \mathbf{1}_T))^2\bigr]=\sum_{S\subseteq[n]}\bigl(1-(-1)^{|S\cap T|}\bigr)^2\hat f(S)^2=4\sum_{\substack{S\subseteq[n]\\|S\cap T|\text{ odd}}}\hat f(S)^2. 
\end{equation}
Multiplying by $\hat{g}(T)^2/2$, summing over $T$, and changing variables to $y=x\oplus \mathbf{1}_T$, we obtain  
$$
\begin{aligned}
2\sum_{\substack{S,T\subseteq[n]\\|S\cap T|\text{ odd}}}\hat f(S)^2\hat g(T)^2
&=\frac12\sum_{T\subseteq[n]}\hat g(T)^2
\E_x\bigl[(f(x)-f(x\oplus \mathbf{1}_T))^2\bigr]\\
&=\frac1{2^{n+1}}\sum_{x,y\in\cube^n}
(f(x)-f(y))^2\hat g(x\oplus y)^2\\
&=\frac1{2^n}\sum_{\substack{x\in\mathcal F\\y\notin\mathcal F}}\hat g(x\oplus y)^2,
\end{aligned}
\eqno\eqtag{eq:cross-support}
$$
where the last equality uses the fact that $f$ is Boolean-valued and the symmetry between $(x,y)$ and $(y,x)$.

Subtracting the constant function $t$ changes only the constant Fourier coefficient:
$$
\widehat{g-t}(R)=\hat g(R)-t\mathbbm{1}_{\{R=\varnothing\}}.
$$
Every pair in~\eqref{eq:cross-support} has $x\ne y$, hence $x\oplus y\ne\varnothing$. Thus
$$
\sum_{\substack{x\in\mathcal F\\y\notin\mathcal F}}\hat g(x\oplus y)^2
=\sum_{\substack{x\in\mathcal F\\y\notin\mathcal F}}\widehat{g-t}(x\oplus y)^2.
\eqno\eqtag{eq:center-cross}
$$
This equality holds for every real $t$. For each fixed $x$, the map $y\mapsto x\oplus y$ enumerates all Fourier indices. Parseval's identity therefore gives $\sum_y\widehat{g-t}(x\oplus y)^2=\|g-t\|_2^2$. Splitting that sum according to membership in $\mathcal F$ yields
$$
\sum_{\substack{x\in\mathcal F\\y\notin\mathcal F}}\widehat{g-t}(x\oplus y)^2
=|\mathcal F|\,\|g-t\|_2^2
 -\sum_{x,y\in\mathcal F}\widehat{g-t}(x\oplus y)^2.
\eqno\eqtag{eq:split-energy}
$$
It therefore suffices to give a lower bound on the last sum. 

For each $y\in\mathcal F$, use the function $H_y(x)=2^n\mathbbm{1}_{\mathcal F}(x)\widehat{g-t}(x\oplus y)$ defined in Section~\ref{subsec:auxiliary}. Take $u_1,\ldots,u_r,v_1,\ldots,v_s$ from Corollary~\ref{cor:orthonormal-families}, where $r=|\mathcal F\setminus\mathcal G|$ and $s=|\mathcal F\setminus\mathcal G^*|$. Equation~\eqref{eq:aux-testing} implies that
$$
\tup{H_y,u_j}=-{t}u_j(y),
\qquad
\tup{H_y,v_k}=(1-t) v_k(y),
$$
Applying Theorem~\ref{thm:bessel} to $H_y$ with $u_1,\ldots,u_r,v_1,\ldots,v_s$, and~\eqref{eq:kernel-vector} gives
$$
t^2\sum_{j=1}^r u_j(y)^2
+{(1-t)^2}\sum_{k=1}^s v_k(y)^2
\le\|H_y\|_2^2={2^n}\sum_{x\in\mathcal F}\widehat{g-t}(x\oplus y)^2.
$$
Every auxiliary function is supported on $\mathcal F$ and has squared norm $1$, so $\sum_{y\in\mathcal F}u_j(y)^2=2^n$ and $\sum_{y\in\mathcal F}v_k(y)^2=2^n$. It follows that
\begin{equation}\label{eq:internal-lower-bound}
\sum_{x,y\in\mathcal F}\widehat{g-t}(x\oplus y)^2  \ge t^2r+(1-t)^2s=t^2|\mathcal F\setminus\mathcal G|+(1-t)^2|\mathcal F\setminus\mathcal G^*|.
\end{equation}
The argument also covers $r=0$, $s=0$, or $\mathcal F=\varnothing$ by the empty-sum convention.

Because $g$ is Boolean-valued and $\E[g^*]=1-\E[g]$, we have
$$
\|g-t\|_2^2=(1-t)^2\E[g]+t^2(1-\E[g])=(1-t)^2\E[g]+t^2\E[g^*].
$$
Combining~\eqref{eq:split-energy} and~\eqref{eq:internal-lower-bound}, and then expanding the two set differences, gives
$$
\begin{aligned}
\sum_{\substack{x\in\mathcal F,y\notin\mathcal F}}\widehat{g-t}(x\oplus y)^2
&\le |\mathcal F|\,\|g-t\|_2^2-t^2|\mathcal F\setminus\mathcal G|-(1-t)^2|\mathcal F\setminus\mathcal G^*|\\
&=t^2\bigl(|\mathcal F\cap\mathcal G|-|\mathcal F|\E[g]\bigr)+(1-t)^2\bigl(|\mathcal F\cap\mathcal G^*|-|\mathcal F|\E[g^*]\bigr)\\
&=2^n\bigl(t^2\Cov(f,g)+(1-t)^2\Cov(f,g^*)\bigr).
\end{aligned}
$$
In the last step we used $|\mathcal F|=2^n\E[f]$, $|\mathcal F\cap\mathcal G|=2^n\E[fg]$ and $|\mathcal F\cap\mathcal G^*|=2^n\E[fg^*]$. 
Combining this bound with~\eqref{eq:cross-support} and~\eqref{eq:center-cross}
proves~\eqref{eq:cross-parameter} for every real $t$.
\end{proof}

\subsection{Proofs of Theorem~\ref{thm:harmonic} and Corollary~\ref{conj:FKKK-antipodal}}

We begin with the following estimate.

\begin{lemma}\label{lem:flip}
Let $f:\cube^n\to\cube$ be increasing and let $\varnothing\ne T\subseteq[n]$. Then
$$
\max_{i\in T}\Inf_i[f]
\le\E_x\bigl[(f(x)-f(x\oplus \mathbf{1}_T))^2\bigr]=4\sum_{\substack{S\subseteq[n]\\|S\cap T|\text{ odd}}}\hat f(S)^2.
\eqno\eqtag{eq:flip}
$$
\end{lemma}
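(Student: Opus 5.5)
The equality on the right of \eqref{eq:flip} is exactly \eqref{eq:flip-fourier}, already derived in the proof of Theorem~\ref{thm:cross} from $\chi_S(x\oplus\mathbf 1_T)=(-1)^{|S\cap T|}\chi_S(x)$ and Parseval, so only the inequality $\Inf_i[f]\le\E_x[(f(x)-f(x\oplus\mathbf 1_T))^2]$ for each fixed $i\in T$ needs proof. Since $f$ is Boolean, the right-hand side equals $\Prb[f(X)\ne f(X\oplus\mathbf 1_T)]$. The plan is a coupling/monotonicity argument that pairs each edge in direction $i$ with the antipodal pair under flipping $T$.

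Fix $i\in T$ and write $T'=T\setminus\{i\}$. For each $x$ with $x_i=0$, consider the two points $z=x\oplus\mathbf 1_{T'}$ restricted to the subcube on $T'$... more concretely, I would partition $\{0,1\}^n$ according to the coordinates outside $T$ and, within each fibre, according to orbits of $x\mapsto x\oplus\mathbf 1_T$. Cleaner: let $a$ be the point with $a_j=0$ for $j\in T$ replaced by... The key idea is this: for an $i$-pivotal edge $\{w,w\oplus\mathbf 1_{\{i\}}\}$ with $w_i=0$, $f(w)=0$, $f(w\oplus\mathbf 1_{\{i\}})=1$, look at the points $u$ obtained from $w$ by setting the coordinates in $T'$ to the value $0$ and $v$ obtained from $w\oplus\mathbf1_{\{i\}}$ by setting $T'$ coordinates to $1$. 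That is not injective, so instead I will use the following averaging: given $x$, define $x^{\downarrow}$ with $x_i=0$ and $x^{\uparrow}=x^{\downarrow}\oplus \mathbf 1_T$ only when $x_j=0$ for...

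The most robust route is the standard one: let $Y$ be uniform, and split $T=\{i\}\cup T'$. Conditioned on the coordinates outside $\{i\}$, consider the chain of flips. Write $f(x)-f(x\oplus\mathbf 1_T)$ and use the identity that for $x$ with $x_i=0$ and $x_{T'}$ arbitrary, the pair $(x, x\oplus\mathbf 1_T)$ and the pair $(x\oplus\mathbf 1_{T'},x\oplus\mathbf 1_{\{i\}})$ together contain, by monotonicity, a disagreement whenever the $i$-edge at $x$ or at $x\oplus\mathbf 1_{T'}$ is pivotal? That fails in general, so I would instead restrict to a cleaner pairing: for $x$ with $x_i=0$ and $x_j=0$ for all $j\in T'$ versus $x_j=1$: with $x_{T'}=0$, the pair $(x,x\oplus\mathbf 1_T)$ has $x\le x\oplus\mathbf 1_T$ and $x\le x\oplus\mathbf 1_{\{i\}}\le x\oplus\mathbf 1_T$; pivotality of the $i$-edge at $x$ forces $f(x)=0,f(x\oplus\mathbf 1_T)=1$. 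Generalising via a \emph{shifting} of $x_{T'}$: map each $i$-edge at base $w$ to the $T$-flip pair $(w\wedge \mathbf 1_{[n]\setminus T'}\ \text{adjusted})$...

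Concretely, I will prove it by induction on $|T'|$ using the following two-step inequality: $\Prb[f(X)\ne f(X\oplus\mathbf 1_{T})]\ge\Prb[f(X)\ne f(X\oplus\mathbf 1_{T\setminus\{j\}})]$ for $j\in T'$. To see this, fix all coordinates outside $\{i,j\}$ (taking $i$ as a representative element of $T\setminus\{j\}$ after also fixing the rest of $T$ appropriately); on the $2$-dimensional face with values $f(00),f(10),f(01),f(11)$ monotone, compare the count of disagreements of the diagonal pairs $\{00,11\},\{10,01\}$ (flip of $\{i,j\}$) against the edge pairs $\{00,10\},\{01,11\}$ (flip of $i$). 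A case check over the six monotone Boolean functions on two bits shows diagonal disagreements $\ge$ edge disagreements. The subtlety, which I expect to be the main obstacle, is that flipping $T\setminus\{j\}$ moves other coordinates too; I handle it by composing with the flip of $T\setminus\{i,j\}$, which is measure preserving but breaks monotonicity of the restricted function. I will resolve this by instead orienting coordinates: replace $f$ by $\tilde f(x)=f(x\oplus \mathbf 1_{A})$ for the set $A$ of flipped coordinates is not monotone, so rather I apply the two-bit check directly to the pairs $(x,x\oplus\mathbf 1_T)$ with $x_{T}$ either all $0$ for the smaller endpoint along a monotone chain, reducing via a symmetric chain decomposition of the subcube on $T$; this is where care is needed.
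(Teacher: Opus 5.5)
There is a genuine gap. Your reduction to showing $\Inf_i[f]\le\Prb[f(X)\ne f(X\oplus\mathbf 1_T)]$ for a fixed $i\in T$ is fine. Your final strategy is also sound: prove that $T\mapsto\Prb[f(X)\ne f(X\oplus\mathbf 1_T)]$ is increasing under inclusion, then descend to $T=\{i\}$. That monotonicity is in fact true, but you never prove the inductive step. Your two-bit check only covers $T=\{i,j\}$. When $T_0:=T\setminus\{j\}$ has two or more elements, the flip $x\mapsto x\oplus\mathbf 1_{T_0}$ leaves every $2$-dimensional face. So ``fixing all coordinates outside $\{i,j\}$'' is incompatible with the pairs you are comparing. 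The closing remarks about reorienting coordinates and a symmetric chain decomposition are not an argument, as you acknowledge, and the earlier pairing attempts are abandoned. As written, the lemma is unproved for $|T|\ge3$.

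The missing observation is that the four-point comparison only uses monotonicity in direction $j$, so non-monotonicity along $T_0$ does no harm. For $x$ with $x_j=0$, put $a=f(x)$, $b=f(x\oplus\mathbf 1_{\{j\}})$, $c=f(x\oplus\mathbf 1_{T_0})$ and $d=f(x\oplus\mathbf 1_T)$. Then $a\le b$ and $c\le d$, and
\[
(a-d)^2+(b-c)^2-(a-c)^2-(b-d)^2=2(b-a)(d-c)\ge0 .
\]
Since $T_0\ne\varnothing$ and $j\notin T_0$, these quadruples partition the cube. Within each quadruple the $\mathbf 1_T$-flip pairs carry the values $(a,d),(b,c)$, and the $\mathbf 1_{T_0}$-flip pairs carry $(a,c),(b,d)$. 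Summing gives the step; the relevant configurations are the nine with $a\le b$, $c\le d$, not the six monotone functions of two bits.

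The paper needs no induction. Since $i\in T$, the substitution $y=x\oplus\mathbf 1_T$ negates $2x_i-1$, so
\[
\Inf_i[f]=2\E[(2X_i-1)f(X)]=\E[(2X_i-1)(f(X)-f(X\oplus\mathbf 1_T))]\le\E|f(X)-f(X\oplus\mathbf 1_T)|.
\]
Your repaired route costs an induction but yields something stronger. The flip-disagreement probability is monotone in $T$, with increment $\Prb[\text{both }X\text{ and }X\oplus\mathbf 1_{T_0}\text{ are }j\text{-pivotal}]$.
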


\begin{proof}
Fix $i\in T$. The change of variables $y=x\oplus \mathbf{1}_T$ preserves the uniform measure and satisfies $2x_i-1=-(2y_i-1)$. Hence 
$$
\E_x[(2x_i-1)f(x\oplus \mathbf{1}_T)]=-\E_y[(2y_i-1)f(y)]=-\E_x[(2x_i-1)f(x)].
$$
Using this identity and~\eqref{eq:influence-sign}, we obtain
$$
\begin{aligned}
\Inf_i[f]
=\E_x\bigl[(2x_i-1)(f(x)-f(x\oplus \mathbf{1}_T))\bigr]\le\E_x\bigl[|f(x)-f(x\oplus \mathbf{1}_T)|\bigr]=\E_x\bigl[(f(x)-f(x\oplus \mathbf{1}_T))^2\bigr].
\end{aligned}
$$
The last equality holds because the difference belongs to $\{-1,0,1\}$. Taking the maximum over $i\in T$ proves the inequality in~\eqref{eq:flip}. The Fourier identity is exactly~\eqref{eq:flip-fourier}.
\end{proof}

\begin{proof}[Proof of Theorem~\ref{thm:harmonic}]
Multiply~\eqref{eq:flip} by $\hat{g}(T)^2$ and sum over nonempty $T$. This gives 
\begin{equation}\label{eq:weighted-flip}
\sum_{\varnothing\ne T\subseteq[n]}\hat g(T)^2\max_{i\in T}\Inf_i[f]\le 4\sum_{T\subseteq[n]}\hat g(T)^2\sum_{\substack{S\subseteq[n]\\|S\cap T|\text{ odd}}}\hat f(S)^2. 
\end{equation}
We have included $T=\varnothing$ on the right-hand side because its contribution is zero. 

Suppose first that $\Cov(f,g)+\Cov(f,g^*)>0$. Taking $t=\Cov(f,g^*)/(\Cov(f,g)+\Cov(f,g^*))$ in Theorem~\ref{thm:cross}, we obtain \begin{equation*}
2\sum_{\substack{S,T\subseteq[n] \\ |S\cap T|\text{ odd}}}\hat g(T)^2\hat f(S)^2\le \frac{\Cov(f,g)\Cov(f,g^*)}{\Cov(f,g)+\Cov(f,g^*)},
\end{equation*}
which, together with~\eqref{eq:weighted-flip}, proves~\eqref{eq:harmonic}. If $\Cov(f,g)+\Cov(f,g^*)=0$, then $\Cov(f,g)=\Cov(f,g^*)=0$ by nonnegativity. Theorem~\ref{thm:cross}  and~\eqref{eq:weighted-flip} then force the nonnegative left-hand side of~\eqref{eq:harmonic} to vanish, as needed. 
\end{proof}

\begin{proof}[Proof of Corollary~\ref{conj:FKKK-antipodal}]
Antipodality gives $g=g^*$, $\E[g]=\frac{1}{2}$, and  $\Var(g)=\frac{1}{4}$. Theorem~\ref{thm:harmonic} reduces to $\Cov(f,g)\ge\sum_{\varnothing\ne S\subseteq[n]}\hat g(S)^2\max_{i\in S}\Inf_i[f]$, while Parseval gives
\[
\sum_{\varnothing\ne S\subseteq[n]}\hat g(S)^2\max_{i\in S}\Inf_i[f]\ge\min_{i\in[n]}\Inf_i[f]\sum_{\varnothing\ne S\subseteq[n]}\hat g(S)^2=\min_{i\in[n]}\Inf_i[f]\cdot\Var(g)
=\frac14\min_{i\in[n]}\Inf_i[f].\tag*{\qedhere}
\]
\end{proof}

\section{Chv\'atal's conjecture}\label{sec:known-equivalence}

We now recall the equivalence established by Friedgut, Kahn, Kalai and Keller~\cite[Proposition~2.1]{FKKK2018correlation}. We include the details only to make the deduction from Theorem~\ref{thm:harmonic} self-contained.

A family $\mathcal{B}\subseteq 2^{[n]}$ is \emph{maximal intersecting} if it is intersecting and is not properly contained in any other intersecting family in $2^{[n]}$. It is \emph{antipodal} if it contains
exactly one of $S$ and $[n]\setminus S$ for every $S\subseteq[n]$.

\begin{prop}\label{lem:known-maximal-antipodal}
A family $\mathcal B\subseteq2^{[n]}$ is maximal intersecting if and only if it is increasing and antipodal. In this case $|\mathcal B|=2^{n-1}$.
\end{prop}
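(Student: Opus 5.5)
We prove the two implications separately and then count; the whole argument is elementary set manipulation.

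\emph{Maximal intersecting $\Rightarrow$ increasing and antipodal.} Let $\mathcal B$ be maximal intersecting. For increasing: if $A\in\mathcal B$ and $A\subseteq A'$, then $A'$ meets every member of $\mathcal B$, since each member meets $A$. So $\mathcal B\cup\{A'\}$ is intersecting, and maximality forces $A'\in\mathcal B$. For antipodality we need two facts.
\begin{itemize}
\item At most one of $S$ and $[n]\setminus S$ lies in $\mathcal B$, because these two sets are disjoint. (If $n\ge1$ and $S=\varnothing$, the set $\varnothing$ is never in a nonempty intersecting family. The degenerate case $n=0$ is handled by the conventions and we ignore it.)
\item At least one lies in $\mathcal B$. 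Suppose neither does. Maximality says $\mathcal B\cup\{S\}$ is not intersecting. So either $S=\varnothing$, or some $B_1\in\mathcal B$ has $B_1\cap S=\varnothing$, that is, $B_1\subseteq[n]\setminus S$. Likewise some $B_2\in\mathcal B$ satisfies $B_2\subseteq S$. Then $B_1\cap B_2=\varnothing$, a contradiction.
\end{itemize}
In the subcase $S=\varnothing$, the complement $[n]$ contains every member of $\mathcal B$. Since $\mathcal B$ is increasing and nonempty (it is maximal), $[n]\in\mathcal B$, which again gives the required member.

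\emph{Increasing and antipodal $\Rightarrow$ maximal intersecting.} Let $\mathcal B$ be increasing and antipodal.
\begin{itemize}
\item $\mathcal B$ is intersecting. If $A,B\in\mathcal B$ were disjoint, then $B\subseteq[n]\setminus A$. Since $\mathcal B$ is increasing, $[n]\setminus A\in\mathcal B$, so both $A$ and its complement lie in $\mathcal B$, contradicting antipodality.
\item $\mathcal B$ is maximal. Any $S\notin\mathcal B$ has $[n]\setminus S\in\mathcal B$, and this set is disjoint from $S$. Hence adding $S$ destroys the intersecting property.
\end{itemize}

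\emph{Counting.} Antipodality pairs each $S$ with $[n]\setminus S$. These $2^{n-1}$ complementary pairs partition $2^{[n]}$ (for $n\ge1$, no set equals its complement). Since $\mathcal B$ takes exactly one set from each pair, $|\mathcal B|=2^{n-1}$.

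The only step needing care is the "at least one" half of antipodality, in particular the edge case with the empty set. I expect no real obstacle beyond that.
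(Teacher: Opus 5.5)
Your proof is correct and follows essentially the same elementary route as the paper. The paper condenses your four implications into a single observation: for increasing $\mathcal B$, a set $S$ meets every member of $\mathcal B$ exactly when $[n]\setminus S\notin\mathcal B$, since a member disjoint from $S$ lies inside its complement. One small omission: in the ``at least one'' step you should also allow $[n]\setminus S=\varnothing$, i.e.\ $S=[n]$, which is handled exactly like your $S=\varnothing$ subcase.
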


\begin{proof}
A maximal intersecting family is increasing, since adjoining
supersets preserves intersection. For an increasing family
$\mathcal B$, a set $S$ intersects every member of $\mathcal B$
if and only if $[n]\setminus S\notin\mathcal B$:
any member disjoint from $S$ is contained in its complement.
Thus $\mathcal B$ is maximal intersecting precisely when
$S\in\mathcal B$ if and only if $[n]\setminus S\notin\mathcal B$,
which is antipodality. Counting complementary pairs gives
$|\mathcal B|=2^{n-1}$.
\end{proof}

Let $\mathcal{D}$ be hereditary, let $\mathcal{B}$ be maximal intersecting, and set $f=1-\mathbbm{1}_{\mathcal{D}}$ and $g=\mathbbm{1}_{\mathcal{B}}$. Then $f,g$ are increasing, and Proposition~\ref{lem:known-maximal-antipodal} gives that $g$ is antipodal. Direct counting gives
\begin{equation*}
\Cov(f,g)=\frac{1}{2^{n}}\left(\frac{|\mathcal{D}|}{2}-|\mathcal{D}\cap\mathcal{B}|\right).
\end{equation*}
Deleting $i$ matches each member of $\mathcal{D}(i)$ with a member of $\mathcal{D}$ omitting $i$. The number of $i$-edges leaving $\mathcal{D}$ is therefore $|\mathcal{D}|-2|\mathcal{D}(i)|$. Each edge contributes $2/2^n$ to the influence, so
\begin{equation*}
\Inf_i[f]=2^{1-n}\bigl(|\mathcal{D}|-2|\mathcal{D}(i)|\bigr).
\end{equation*}
Subtracting gives the useful identity
\begin{equation}\label{eq:equivalence}
\Cov(f,g)-\frac14\min_{i\in [n]}\Inf_i[f]
=2^{-n}\left(\max_{i\in [n]}|\mathcal{D}(i)|-|\mathcal{D}\cap\mathcal{B}|\right).
\end{equation}
\begin{proof}[Proof of Theorem~\ref{conj:chvatal}]
Extend $\mathcal{F}$ to a maximal intersecting family $\mathcal B$ in $2^{[n]}$; this is possible by finiteness, even when $\mathcal{F}$ is empty. Corollary~\ref{conj:FKKK-antipodal} and~\eqref{eq:equivalence} imply
\[
|\mathcal F|\le|\mathcal D\cap\mathcal B|\le\max_i|\mathcal D(i)|.\tag*{\qedhere}
\]
\end{proof}
For the converse implication, start with increasing Boolean $f,g$ with $g$ antipodal and take $\mathcal{D}=\supp (1-f)$ and $\mathcal{B}=\supp g$. Proposition~\ref{lem:known-maximal-antipodal} makes $\mathcal D\cap\mathcal B$ an intersecting subfamily of $\mathcal D$. The star bound makes the right-hand side of~\eqref{eq:equivalence} nonnegative, giving~\eqref{eq:antipodal-consequence}.

\section{Sharpness and further consequences}\label{sec:consequences}
\paragraph{Sharpness.}
Write $\W(f,g):=\sum_{\varnothing\ne S\subseteq[n]}\hat g(S)^2\max_{i\in S}\Inf_i[f]$. We also write $\mathrm{AND}_n(x)=\prod_{i=1}^nx_i$ and $\mathrm{OR}_n(x)=1-\prod_{i=1}^n(1-x_i)$. 
\begin{prop}\label{prop:sharpness}
If $f(x)=\prod_{i=1}^n x_i$, then equality holds in~\eqref{eq:harmonic} for every increasing Boolean $g$. The factor $1/4$ in~\eqref{eq:antipodal-consequence} is optimal.
\end{prop}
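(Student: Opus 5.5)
The plan is to compute both sides of \eqref{eq:harmonic} explicitly for $f=\mathrm{AND}_n$ and check that they agree. The first step is the influences. Since $\supp f=\{[n]\}$, the only pair $\{x,x\oplus\mathbf 1_{\{i\}}\}$ on which $f$ changes is $\{[n],[n]\setminus\{i\}\}$. Hence $\Inf_i[f]=2\cdot2^{-n}=2^{1-n}$ for every $i\in[n]$, so the maximum over $i\in S$ is the same for every nonempty $S$. Parseval then gives
$$\W(f,g)=2^{1-n}\sum_{S\ne\varnothing}\hat g(S)^2=2^{1-n}\Var(g)=2^{1-n}\E[g](1-\E[g]),$$
where the last equality uses that $g$ is Boolean.

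The second step is the two covariances. We have $\Cov(f,g)=2^{-n}\bigl(g([n])-\E[g]\bigr)$. I will split into cases according to whether $g$ is constant. If $g\equiv0$ or $g\equiv1$, then $g^*$ is also constant, so both covariances vanish. The right-hand side is then $0$ by the footnote convention, and $\W(f,g)=0$ because $\Var(g)=0$.

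Otherwise $g$ is increasing and not identically zero, so $g([n])=1$. This gives $\Cov(f,g)=2^{-n}(1-\E[g])$. The dual $g^*$ is increasing and nonconstant, and $\E[g^*]=1-\E[g]$, so $\Cov(f,g^*)=2^{-n}(1-\E[g^*])=2^{-n}\E[g]$. The two covariances sum to $2^{-n}$, and their harmonic mean is
$$\frac{2\cdot 2^{-2n}\E[g](1-\E[g])}{2^{-n}}=2^{1-n}\E[g](1-\E[g]),$$
which is exactly $\W(f,g)$. This proves equality in \eqref{eq:harmonic}.

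For optimality of the constant $\frac14$ in \eqref{eq:antipodal-consequence}, it suffices to exhibit one pair of increasing Boolean functions with $g$ antipodal and equality. I will take $f=\mathrm{AND}_n$ and the dictator $g(x)=x_1$, which is increasing and satisfies $g^*(x)=1-(1-x_1)=x_1=g$. Then $\Cov(f,g)=2^{-n}\cdot\frac12=2^{-n-1}$, while $\frac14\min_i\Inf_i[f]=\frac14\cdot2^{1-n}=2^{-n-1}$. So equality holds, and no constant larger than $\frac14$ can work in \eqref{eq:antipodal-consequence}; this also follows directly from the general equality just proved, since $\Var(g)=\frac14$.

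There is no real obstacle. The only points needing care are the constant-$g$ case, which is handled by the zero-denominator convention, and checking that $g([n])=1$ and $g^*([n])=1$ whenever $g$ is nonconstant.
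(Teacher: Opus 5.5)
Your proposal is correct and follows the paper's proof essentially step for step. After disposing of constant $g$, you compute $\Inf_i[f]=2^{1-n}$, $\Cov(f,g)=2^{-n}(1-\E[g])$, $\Cov(f,g^*)=2^{-n}\E[g]$ and $\W(f,g)=2^{1-n}\E[g](1-\E[g])$, and then specialize to $\E[g]=\tfrac12$. The only cosmetic difference is that you exhibit the dictator $g(x)=x_1$ as an explicit antipodal witness for optimality of $\tfrac14$, whereas the paper argues for an arbitrary antipodal $g$.
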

\begin{proof}
The constant cases for $g$ are immediate. Otherwise set $a=2^{-n}$ and $b=\E[g]$. Monotonicity gives $g(\mathbf{0})=0$, $g(\mathbf{1})=1$, and hence
\[
\Inf_i[f]=2a,\qquad \Cov(f,g)=a(1-b),\qquad \Cov(f,g^*)=ab.
\]
Parseval's identity gives $\W(f,g)=2ab(1-b)$, which equals the harmonic mean in~\eqref{eq:harmonic}. If $g$ is antipodal, then $b=\frac{1}{2}$, so $\Cov(f,g)=\frac{a}{2}=\frac14\min_i\Inf_i[f]$.
\end{proof}
\begin{remark}
For $f=\mathrm{AND}_2$ and $g=\mathrm{OR}_2$,
\[
\Cov(f,g)=\frac1{16},\qquad \Cov(f,g^*)=\frac3{16},\qquad
\W(f,g)=\frac3{32}>\Cov(f,g).
\]
Thus $\Cov(f,g)\ge\W(f,g)$ fails without antipodality.    
\end{remark}

\paragraph{Kleitman's weighted bound.}
The ordered spectral term yields coefficients that depend only on the maximal intersecting family, not on the hereditary family or the weight being tested. This gives Kleitman's strengthening~\cite{Kleitman1979} in its weighted formulation~\cite{Fishburn1988}; see also~\cite[Section~3]{FKKK2018correlation}. Cary~\cite{Cary2026} recently studied related formulations using cubical cohomology and convex optimization.

Fix a permutation
$(i_1,\ldots,i_n)$ of $[n]$, and let $\prec$ be the total order defined by
$i_j\prec i_k$ if and only if $j<k$. For every nonempty $S\subseteq[n]$,
let $\max_\prec S$ denote its last element in this order; explicitly,
$\max_\prec S=i_k$, where $k=\max\{j:i_j\in S\}$.
\begin{prop}\label{prop:weighted}
Let $\mathcal B\subseteq2^{[n]}$ be maximal intersecting, and set $g=\mathbbm{1}_{\mathcal B}$. Define
\[
\lambda_i
=
4\sum_{\substack{\varnothing\ne S\subseteq[n]\\
                  \max_\prec S=i}}\hat g(S)^2.
\]
Then $\lambda_i\ge0$ for every $i\in [n]$, $\sum_{i=1}^n\lambda_i=1$, and for every nonnegative decreasing function $\omega:2^{[n]}\to\mathbb R$,
\begin{equation}\label{eq:weightedstar}
\sum_{S\in\mathcal B}\omega(S)
\le\sum_{i=1}^n\lambda_i\sum_{S\ni i}\omega(S)
\le\max_{i\in[n]}\sum_{S\ni i}\omega(S).
\end{equation}
Consequently, among all intersecting families, some star has maximum total $\omega$-weight.
\end{prop}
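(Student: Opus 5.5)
We first dispose of the two easy claims. Nonnegativity of $\lambda_i$ is immediate because it is a sum of squares. Since $\mathcal B$ is maximal intersecting, Proposition~\ref{lem:known-maximal-antipodal} shows that $g$ is increasing and antipodal, so $\E[g]=\frac12$ and $\Var(g)=\frac14$. Every nonempty $S$ has exactly one $\prec$-last element, so the nonempty sets split according to $\max_\prec S$. Summing over $i$ and using Parseval therefore gives $\sum_i\lambda_i=4\Var(g)=1$.

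The second inequality in~\eqref{eq:weightedstar} is then trivial: the $\lambda_i$ form a probability vector, so a $\lambda$-average of the star weights is at most their maximum. The final sentence will follow as well. Any intersecting family extends to a maximal intersecting $\mathcal B$, and since $\omega\ge0$ the weight only grows under this extension. Each star $\{S:S\ni i\}$ is itself intersecting.

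The main work is the first inequality. Nonnegative decreasing $\omega$ form a convex cone, and both sides of the inequality are linear in $\omega$. So I would reduce to the extreme rays of this cone, namely the indicators $\omega=\mathbbm 1_{\mathcal D}$ of hereditary families $\mathcal D$. Indeed, $\omega=\sum_{c>0}(\text{layer widths})\,\mathbbm 1_{\{\omega\ge c\}}$, and each level set of a decreasing function is hereditary. For $\omega=\mathbbm 1_{\mathcal D}$ the claim becomes
\[
|\mathcal D\cap\mathcal B|\le\sum_i\lambda_i|\mathcal D(i)|.
\]

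Set $f=1-\mathbbm 1_{\mathcal D}$. The computation in Section~\ref{sec:known-equivalence} gives $\Cov(f,g)=2^{-n}(|\mathcal D|/2-|\mathcal D\cap\mathcal B|)$ and $\Inf_i[f]=2^{1-n}(|\mathcal D|-2|\mathcal D(i)|)$. Using $\sum\lambda_i=1$, the desired inequality is equivalent to
\[
\Cov(f,g)\ge\frac14\sum_i\lambda_i\Inf_i[f]=\sum_{\varnothing\ne S}\hat g(S)^2\,\Inf_{\max_\prec S}[f].
\]
The right-hand side is at most $\sum_{S\ne\varnothing}\hat g(S)^2\max_{i\in S}\Inf_i[f]$, because $\max_\prec S\in S$. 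Theorem~\ref{thm:harmonic} with $g=g^*$ bounds this by the harmonic mean of $\Cov(f,g)$ with itself, which is $\Cov(f,g)$. This completes the argument.

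The only delicate point is the layer-cake reduction. It needs $\omega\ge0$ so that the level sets with $c>0$ exhaust $\omega$. If $\omega$ takes finitely many values, the decomposition is a finite sum, so no limiting argument is required.
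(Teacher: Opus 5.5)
Your proposal is correct and follows essentially the same route as the paper: Parseval with the unique $\prec$-maximum gives $\sum_i\lambda_i=1$, the antipodal case of Theorem~\ref{thm:harmonic} together with $\max_\prec S\in S$ gives $\Cov(f,g)\ge\frac14\sum_i\lambda_i\Inf_i[f]$, and a layer-cake decomposition reduces the weighted bound to hereditary families. The differences are cosmetic. You convert to counts using the explicit formulas of Section~\ref{sec:known-equivalence}, while the paper uses the linear function $q=\sum_i\lambda_i x_i$ and $\Cov(1-h,g-q)\ge0$. You also sum finitely over level sets $\{\omega\ge c\}$ where the paper integrates over $\{\omega>t\}$.
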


\begin{proof}
By Proposition~\ref{lem:known-maximal-antipodal}, $g$ is increasing
and antipodal. Thus $\E[g]=\frac{1}{2}$ and $\Var(g)=\frac{1}{4}$. Each nonempty
$S\subseteq[n]$ has a unique $\prec$-maximum, so Parseval's identity gives
$$
\sum_{i=1}^n\lambda_i=4\sum_{\varnothing\ne S\subseteq[n]}\hat g(S)^2=4\Var(g)=
1.
$$
Let $q(x)=\sum_{i=1}^n\lambda_i x_i$. Then $\E[q]=\frac{1}{2}=\E[g]$. For every increasing Boolean function $f$,
$$
\Cov(f,x_i)=\frac14\bigl(\E[f\mid x_i=1]-\E[f\mid x_i=0]\bigr)=
\frac14\Inf_i[f],
$$
where the last equality uses the monotonicity and Boolean valued of $f$.
The antipodal case of Theorem~\ref{thm:harmonic} therefore gives
$$
\Cov(f,g)\ge\sum_{\varnothing\ne S\subseteq[n]}\hat g(S)^2\max_{i\in S}\Inf_i[f]\ge\sum_{\varnothing\ne S\subseteq[n]}\hat{g}(S)^2\Inf_{\max_\prec S}[f]=\frac14\sum_{i=1}^n\lambda_i\Inf_i[f]=\Cov(f,q).
$$

Now let $\mathcal{D}\subseteq2^{[n]}$ be decreasing, and set $h=\mathbbm{1}_{\mathcal{D}}$. Since $1-h$ is increasing and $\E[g-q]=0$, the preceding comparison implies
$$
0\le \Cov(1-h,g-q)=-\E[h(g-q)]=\frac{1}{2^n}\left(\sum_{i=1}^n\lambda_i|\mathcal D(i)|-|\mathcal B\cap\mathcal D|
\right).
$$
Hence $|\mathcal B\cap\mathcal D|\le
\sum_{i=1}^n\lambda_i|\mathcal D(i)|$ holds for every decreasing family $\mathcal D$.

For $t\ge0$, define $\mathcal{D}_t=\{S\subseteq[n]:\omega(S)>t\}$. Each $\mathcal D_t$ is decreasing, and nonnegativity gives
$$
\omega(S)=\int_0^\infty\mathbbm{1}_{\mathcal D_t}(S)\,dt.
$$
Hence
$$
\sum_{S\in\mathcal B}\omega(S)=\int_0^\infty|\mathcal B\cap\mathcal D_t|\,dt\le\sum_{i=1}^n\lambda_i\int_0^\infty|\mathcal D_t(i)|\,dt=
\sum_{i=1}^n\lambda_i\sum_{\substack{S\subseteq[n]\\i\in S}}\omega(S).
$$
This proves the first inequality in~\eqref{eq:weightedstar}.
The second follows because the middle expression is a convex
combination of the star weights.

Finally, every intersecting family $\mathcal{F}$ is contained in a maximal intersecting family $\mathcal B'$. Since $\omega\ge0$,
applying~\eqref{eq:weightedstar} to $\mathcal B'$ gives
$$
\sum_{S\in\mathcal{F}}\omega(S)\le
\sum_{S\in\mathcal B'}\omega(S)\le
\max_{i\in[n]}\sum_{i\in S}\omega(S).
$$
For every $i\in [n]$, the star $\{S\subseteq [n]:i\in S\}$ is itself intersecting, so a star attains the
maximum total $\omega$-weight.
\end{proof}

To compare this with~\cite[Section~3]{FKKK2018correlation}, let $h=2g-1$.
Then $h$ is an increasing $\{-1,1\}$-valued function satisfying $h(1-x)=-h(x)$, and $h_+^2=g$, where $h_+=\max\{h,0\}$. For every nonempty $S$, we have $\hat{h}(S)=2\hat{g}(S)$, and hence
$$
\lambda_i=\sum_{\substack{\varnothing\ne S\subseteq[n]\\\max_\prec S=i}}\hat{h}(S)^2.
$$
Thus Proposition~\ref{prop:weighted}  proves the $\{-1,1\}$-valued case
of~\cite[Conjecture~3.4]{FKKK2018correlation} and its equivalent formulation~\cite[Conjecture~3.5]{FKKK2018correlation}. The same argument proves the $\{-1,1\}$-valued case of~\cite[Conjecture~3.6]{FKKK2018correlation}.

\section*{Acknowledgements}
The authors thank Mengyu Cao, Ting-wei Chao, Xingtong Guo, Guowei Sun, and Haixiang Zhang for valuable discussions during the fourth ECOPRO Student Research Program, held at the Institute for Basic Science (IBS) in the summer of 2026. 

The authors acknowledge the use of AI tools. We used ChatGPT to test candidate inequalities for special classes of functions involving squared covariances of $f$ and (the dual of) $g$. In particular, ChatGPT proved the special case of the inequality in Theorem~\ref{thm:harmonic} when $f$ is a symmetric threshold function, inspiring the authors to pursue and ultimately prove the stronger inequality in Theorem~\ref{thm:cross}. All mathematical arguments and proofs in the manuscript were written and checked by the authors.

\bibliographystyle{abbrv}
\bibliography{reference1}

\end{document}